\newtheorem{thm}{Theorem}[section] 
\newtheorem{question}{Question}
\theoremstyle{definition}
\newtheorem{remark}[thm]{Remark}
\newtheorem{conjecture}[thm]{Conjecture}
\title{On percolation in Poisson graphs}
\author{Johan Bj\"orklund\footnote{Department of Mathematics, Uppsala University, SE-75310 Uppsala, Sweden and Department of Mathematics, Université Pierre et Marie Curie, 75005 Paris, France. Supported  by the Swedish Research Council.}
	\and Victor Falgas-Ravry\footnote{Department of Mathematics, Vanderbilt University, 
1326 Stevenson Center, Nashville, TN 37240, USA.}\and
Cecilia Holmgren\footnote{Department of Mathematics, Stockholm University, 114 18 Stockholm, Sweden. Supported  by the Swedish Research Council.}}
\begin{document}

\maketitle

\begin{abstract}
Equip each point $x$ of a homogeneous Poisson process $\mathcal{P}$  on $\mathbb{R}$ with $D_x$ edge stubs, where the $D_x$ 
are i.i.d. positive integer-valued random variables with distribution given by $\mu$. 
Following the \emph{stable multi-matching scheme} introduced by Deijfen, H\"aggstrom and Holroyd~\cite{Deijfen}, 
we pair off edge stubs in a series of rounds to form the edge set of an infinite component $G$ on the vertex set $\mathcal{P}$. 
In this note, we answer questions of Deijfen, Holroyd  and Peres~\cite{Deijfen2} and Deijfen, H\"aggstr\"om and Holroyd~\cite{Deijfen} 
on percolation (the existence of an infinite connected component) in $G$. We prove that percolation may occur a.s.\ even if $\mu$ has support over odd integers. 
Furthermore, we show that for any $\varepsilon>0$ there exists a distribution $\mu$ such that $\mu(\{1\})>1-\varepsilon$ such that percolation still occurs a.s..
\end{abstract}
\textbf{Keywords:} Poisson process, random graph,
matching, percolation.

\textbf{MSC 2010 subject classifications:}
60C05; 60D05; 05C70; 05C80
\section{Introduction}In this paper, we study certain matching processes on the real line. Let $D$ be a random variable with distribution $\mu$ supported on the positive integers. Generate a set of vertices $\mathcal{P}$ by a Poisson point process of intensity $1$ on $\mathbb{R}$. Equip each vertex $x\in \mathcal{P}$ with a random number $D_x$ of edge stubs, where the $(D_x)_{x\in\mathcal{P}}$ are i.i.d. random variables with distribution given by $D$. Now form edges in rounds by matching edge stubs in the following manner. In each round, say that two vertices $x,y$ are \emph{compatible} if they are not already joined by an edge and both $x$ and $y$ still possess some unmatched edge stubs. Two such vertices form a \emph{mutually closest compatible pair} if $x$ is the nearest $y$-compatible vertex to $y$ in the usual Euclidean distance and vice-versa. For each such mutually closest compatible pair $(x,y)$, remove an edge stub from each of $x$ and $y$ to form the edge $xy$. Repeat the procedure indefinitely.

This matching scheme, known as \emph{stable multi-matching}, was introduced by Deijfen, H\"aggstr\"om and Holroyd \cite{Deijfen}, who showed that it a.s.\ exhausts the set of edge stubs, yielding an infinite graph $G=G(\mu)$ with degree distribution given by $\mu$.

A natural question to ask is which degree distributions $\mu$ (if any) yield an infinite connected component in $G$. For example if $\mu(\{1\})=1$, 
then no such component exists, while if $\mu(\{2\})=1$, Deijfen, Holroyd and Peres~\cite{Deijfen2} 
suggest  that percolation (the existence of an infinite component) occurs a.s.. Note that by (a version of) Kolmogorov's zero--one law, the probability of percolation occurring is zero or one.  Also, as shown by Deijfen, Holroyd and Peres~(see~\cite{Deijfen2}, Proposition 1.1), an infinite component in $G$, if it exists, is almost surely unique.

Taking the Poisson point process in $\mathbb{R}^d$ for some $d\geq 1$ and applying the stable multi-matching scheme mutatis mutandis, we obtain the $d$-dimensional Poisson graph $G_d$. Deijfen, H\"aggstr\"om and Holroyd proved the following result on percolation in $G_d$:
\begin{thm} \label{mainDeijfen} (Deijfen, H\"aggstr\"om and Holroyd \cite[Theorem 1.2]{Deijfen})
\begin{enumerate}
\item[(i)] For all $d\geq 2 $ there exists $k=k(d) $ such that if $\mu(\{n\in \mathbb{N}:\ n\geq k\})=1$, then a.s.\ $G_d$ percolates.
\item[(ii))] For all $ d\geq 1 $, if $\mu(\{1,2\})=1$ and $\mu(\{1\})>0 $, then a.s.\ $G_d$ does not percolate.
\end{enumerate}
\end{thm}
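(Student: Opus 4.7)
The theorem splits into two parts of quite different flavour, and I would handle them separately. For part (i), the natural approach is a renormalisation argument comparing $G_d$ to supercritical Bernoulli site percolation on $\mathbb{Z}^d$. Partition $\mathbb{R}^d$ into axis-aligned cubes of a large fixed side length $L$ and declare a cube $Q$ \emph{good} if (a) the Poisson point count in a bounded neighbourhood of $Q$ lies in a typical range, and (b) in the matching $G_d$ every Poisson point in $Q$ is joined by an edge to at least one Poisson point in each of the $2d$ axis-adjacent cubes. With minimum degree $k$ large, each Poisson point must distribute many stubs among nearby points, and by a concentration/pigeonhole argument one expects $\mathbb{P}(Q\text{ is good})\to 1$ as $k\to\infty$. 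After a suitable truncation this event is determined by the Poisson process in a bounded window, so the good cubes form a finite-range dependent site process on $\mathbb{Z}^d$, which by the Liggett--Schonmann--Stacey theorem stochastically dominates Bernoulli site percolation with parameter arbitrarily close to $1$; for $d\geq 2$ this exceeds the site-percolation threshold, producing an infinite cluster of good cubes and hence an infinite component of $G_d$.

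The main obstacle in (i) is twofold. First, stable multi-matching is intrinsically non-local---matches may in principle span arbitrarily large distances---so one has to localise, showing that with overwhelming probability all matches of a point lie within a bounded neighbourhood. Second, conditional on localisation, one has to show that the $k$ matched partners of a typical point spread across all $2d$ coordinate directions rather than clumping; both points should follow from large-deviation-type estimates that sharpen as $k$ grows.

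For part (ii) the argument is structural. Because $\mu(\{1,2\})=1$, every vertex of $G_d$ has degree $1$ or $2$, so every connected component is either a cycle (finite), a finite path, a one-ended ray, or a doubly infinite path; in particular each component contains at most two vertices of degree $1$. Suppose for contradiction that $G_d$ percolates a.s.. By \cite[Proposition~1.1]{Deijfen2} the infinite component $C$ is a.s.\ unique, so it is either a ray or a doubly infinite path. If $C$ is a ray, its unique degree-$1$ endpoint defines a translation-equivariant measurable function from the marked Poisson configuration to a random point of $\mathbb{R}^d$, whose law is therefore translation-invariant---impossible, as no probability measure on $\mathbb{R}^d$ is translation-invariant.

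If instead $C$ is a doubly infinite path of degree-$2$ vertices, the argument is more subtle. By ergodicity and $\mu(\{1\})>0$, degree-$1$ vertices have positive density everywhere in $\mathbb{R}^d$, and by uniqueness of $C$ none of them lie on $C$. I would aim to exploit the mutually-closest-pair rule: for every edge $x_ix_{i+1}$ of $C$, at the round in which it was formed, no compatible degree-$1$ vertex could have been closer to $x_i$ than $x_{i+1}$. Combined with an ergodic/density argument showing that such close compatible degree-$1$ vertices exist with positive probability at the relevant round, this should yield a contradiction. Executing this case cleanly is, in my view, the main technical hurdle in the proof of (ii).
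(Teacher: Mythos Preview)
This theorem is not proved in the present paper; it is quoted verbatim from \cite[Theorem~1.2]{Deijfen} as background, and no proof is supplied here. Consequently there is no ``paper's own proof'' to compare your proposal against.

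That said, the paper does remark on the original argument for part~(i): ``Their proof of part~(i) of Theorem~\ref{mainDeijfen} relies on a comparison of the $d$-dimensional stable multi-matching process with dependent site percolation on $\mathbb{Z}^d$.'' Your renormalisation outline (block cubes, finite-range dependent good events, Liggett--Schonmann--Stacey domination) is exactly this strategy, and you have correctly identified the two genuine obstacles---localising the a~priori non-local matching rule, and forcing the $k$ neighbours of a point to spread across all $2d$ directions. So for part~(i) your proposal is aligned with what the paper reports about the original proof.

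For part~(ii) the paper says nothing about how \cite{Deijfen} argues, so no comparison is possible. I would, however, flag two soft spots in your sketch. First, the uniqueness result you invoke, \cite[Proposition~1.1]{Deijfen2}, is stated in the present paper only for $G=G_1$; you would need to check that it (or an analogue) holds for all $d\geq 1$ before using it here. Second, and more seriously, you yourself acknowledge that the doubly-infinite-path case is unfinished: the heuristic that ``close compatible degree-$1$ vertices exist with positive probability at the relevant round'' needs a concrete mechanism tying the dynamics of the multi-round matching to the geometry of the putative bi-infinite path, and as written there is no argument there, only an expectation. Until that case is handled, part~(ii) remains a proof plan rather than a proof.
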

Their proof of part (i) of Theorem~\ref{mainDeijfen} relies on a comparison of the $d$-dimensional stable multi-matching process with dependent site percolation on $\mathbb{Z}^d$. In particular, since the threshold for percolation in $\mathbb{Z}$ is trivial, their argument cannot say anything about percolation in the $1$-dimensional Poisson graph $G=G_1$.

Related to part (ii) of Theorem~\ref{mainDeijfen}, Deijfen, H\"aggstr\"om and Holroyd asked the following question.
\begin{question}[Deijfen, H\"aggstr\"om and Holroyd]\label{question2}
Does there exist some $\varepsilon>0$ such that if $\mu(\{1\})>1-\varepsilon$, then a.s.\ $G_d$ contains no infinite component?
 \end{question}
In subsequent work on $G=G_1$, Deijfen, Holroyd and Peres~\cite{Deijfen2} observed that simulations suggested percolation might not occur when $\mu(\{3\})=1$, and asked whether the presence of odd degrees kills off infinite components in general. 
\begin{question}[Deijfen, Holroyd and Peres]\label{question1}
 Is it true that percolation in $G=G_1$ occurs a.s., if and only if, $\mu$ has support only on the even integers?
 \end{question}
In this paper we prove the following theorem, answering Question~\ref{question1} negatively:
\begin{thm}\label{main}
There exist degree distributions $\mu$ with support on the odd integers, such that the stable multi-matching process a.s.\ yields an infinite component.
\end{thm}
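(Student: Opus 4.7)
The plan is to take $\mu = (1-\delta)\delta_1 + \delta\,\delta_N$ for small $\delta>0$ and a large odd integer $N$, both to be chosen. Its support $\{1,N\}$ consists of odd integers as required, and by Poisson thinning $\mathcal{P}$ splits into independent subprocesses of \emph{leaves} (degree $1$) at rate $1-\delta$ and \emph{hubs} (degree $N$) at rate $\delta$. Note that by taking $\delta<\varepsilon$ one also has $\mu(\{1\})>1-\varepsilon$, so the same construction should simultaneously address Question~\ref{question2}.

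The strategy is to show that the induced subgraph of $G$ on the hubs contains an infinite connected component. Enumerating the hubs in order along the line as $\ldots<h_{-1}<h_0<h_1<\ldots$, the target claim is that a.s., for every $i$, the consecutive hubs $h_i$ and $h_{i+1}$ are joined by an edge of $G$; this immediately gives the doubly-infinite path $\ldots h_{-1}h_0h_1\ldots$ in the final graph, and hence percolation. To analyse the matching I would work round-by-round. Round~$1$ produces the mutual nearest-neighbor pairs, which by a standard Poisson computation cover a density $2/3$ of $\mathcal{P}$ and are overwhelmingly leaf--leaf pairs since leaves dominate locally. Leaves, carrying only a single stub, are exhausted within the first few rounds, after which the only compatible vertices with remaining stubs are hubs. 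Once all leaves originally lying between $h_i$ and $h_{i+1}$ have been matched, the hubs $h_i$ and $h_{i+1}$ become each other's nearest compatible vertex on the relevant side and match in some later round, provided both still possess an unmatched stub.

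The quantitative heart of the argument is a bound on the number of stubs a given hub expends on leaf-matches: since leaves outnumber hubs by a factor $\sim 1/\delta$, most leaf-matches go to other leaves, so each hub should absorb only $O(1/\delta)$ leaf-stubs in expectation. Choosing $N\gg 1/\delta$ would then leave each hub with enough surplus stubs to match its near hub-neighbors, establishing the target claim. The main obstacle is converting these heuristic expected counts into a.s.\ statements: the gap between consecutive hubs is geometric and so can be atypically large, and non-local couplings between rounds arise because an unmatched leaf in one round may travel to a distant match later. A renormalisation together with a stochastic-domination argument, working on a block scale where the hub density and the cascade of leaf-matches behave typically and controlling the tails of atypical events, should allow one to choose $\delta$ and then odd $N$ large enough that the target claim holds. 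Combined with the $0$--$1$ law this yields a.s.\ percolation and thus proves Theorem~\ref{main}.
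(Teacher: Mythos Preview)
Your proposal pursues a route the paper explicitly leaves open. You take $\mu$ with \emph{bounded} support $\{1,N\}$; the paper's construction, by contrast, has unbounded support and infinite expected degree (degrees $d_i = 10\cdot 4^i$ with masses $2^{-i}$, shifted by~$1$ at the end to make them odd), and the authors remark afterwards that ``it would be interesting to find an example with bounded support only''. The bounded-support statement you are attacking is in effect the content of the paper's concluding conjecture, not of Theorem~\ref{main} as proved.

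The sketch has a genuine gap precisely where you label it ``the quantitative heart'': you need each hub to spend only $O(1/\delta)$ stubs on leaves, but the stable multi-matching dynamics are non-local and multi-round, and you supply no argument that actually controls this quantity. Your target claim that a.s.\ \emph{every} consecutive hub pair $h_i,h_{i+1}$ is joined is moreover stronger than percolation requires and is very likely false for any fixed $N$, since somewhere on the line the Poisson process will a.s.\ realise a hub embedded in a configuration of leaves that forces it to exhaust all $N$ stubs before becoming mutually closest compatible with an adjacent hub. The renormalisation/domination scheme you gesture at is exactly the missing hard work, and the authors evidently did not see how to carry it out either. The paper sidesteps the dynamics entirely via the \emph{strongly connected} criterion --- if $\vert B(x,\vert y-x\vert)\vert\le D_x$ and $\vert B(y,\vert y-x\vert)\vert\le D_y$ then $xy$ is an edge of $G$ regardless of what happens elsewhere --- which reduces everything to local Poisson tail bounds: from a degree-$d_i$ vertex one finds, with probability at least $1-e^{-2^i}(1+o(1))$, a strongly connected degree-$d_{i+1}$ vertex in a prescribed annulus, and these events chain into an infinite path with positive (hence, by the zero--one law, full) probability.
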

Furthermore, we show in Remark \ref{answerquestion2} that for any $\varepsilon>0$ we can construct such a degree distribution $\mu$ with  $\mu(\{1\})>1-\varepsilon$, thus also answering Question~\ref{question2} negatively.  We note however that the distribution $\mu$ we construct has unbounded support; it would be interesting to find an example with bounded support only.

\section{Proof of Theorem \ref{main}}
The idea behind the proof of Theorem~\ref{main} is to set $\mu(\{d_i\})=1/2^i$ for a sharply increasing sequence of integers $(d_i)_{i \in \mathbb{N}}$. Suppose we are given a vertex $x_i$ with degree $D_{x_i}=d_i$. By choosing $d_i$ large enough we can ensure that with probability close to $1$, there exists some vertex $x_{i+1}$ with $D_{x_{i+1}}=d_{i+1}$ that is connected to $x_i $ by an edge of $G$. 
Let $E_i $, $ i\geq 1 $, be the event that a given vertex $x_i$ of degree $d_i$ is connected to some vertex $x_{i+1}$ of degree $d_{i+1} $. Starting from a vertex $x_1$ of degree $d_1$, we see that if $\bigcap_{i=1}^{\infty} E_i$ occurs, then there is an infinite path $x_1x_2x_3\ldots$  in $G$. If the events $(E_i)_{i \in \mathbb{N}}$ were independent of each other, then $\mathbb{P}(\bigcap_{i=1}^{\infty} E_i) = \prod_{i \in \mathbb{N}} \mathbb{P}(E_i)$, which we could make strictly positive by letting the sequence $(d_i)_{i \in \mathbb{N}}$ grow sufficiently quickly, ensuring in turn that percolation occurs a.s.. Of course the events $(E_i)_{i \in \mathbb{N}}$ as we have loosely defined them above are highly dependent. We circumvent this problem by working with a sequence of slightly more restricted events, for which we do have full independence. As we have no restrictions on the $d_i$ other than their growth rate, each of them can be chosen to be odd or even as we please.

Before we begin the proof, let us introduce the following notation. Given $x\in\mathcal{P}$, let $B(x,r)$ be the collection of all vertices in $\mathcal{P}$ within distance at most $r$ of $x$. We say that a pair of vertices $(x,y)$ with degrees $(D_x, D_y)$ is \emph{strongly connected} if $\vert B(x,\vert y-x\vert )\vert \leq D_x$ and $\vert B(y,\vert y-x \vert)\vert\leq D_y$. Observe that if a pair of vertices $(x,y)$ is strongly connected, then there will a.s.\ be an edge between $x$ and $y$ in the stable multi-matching scheme. 
\begin{figure}\begin{center}
		\includegraphics[scale=0.4]{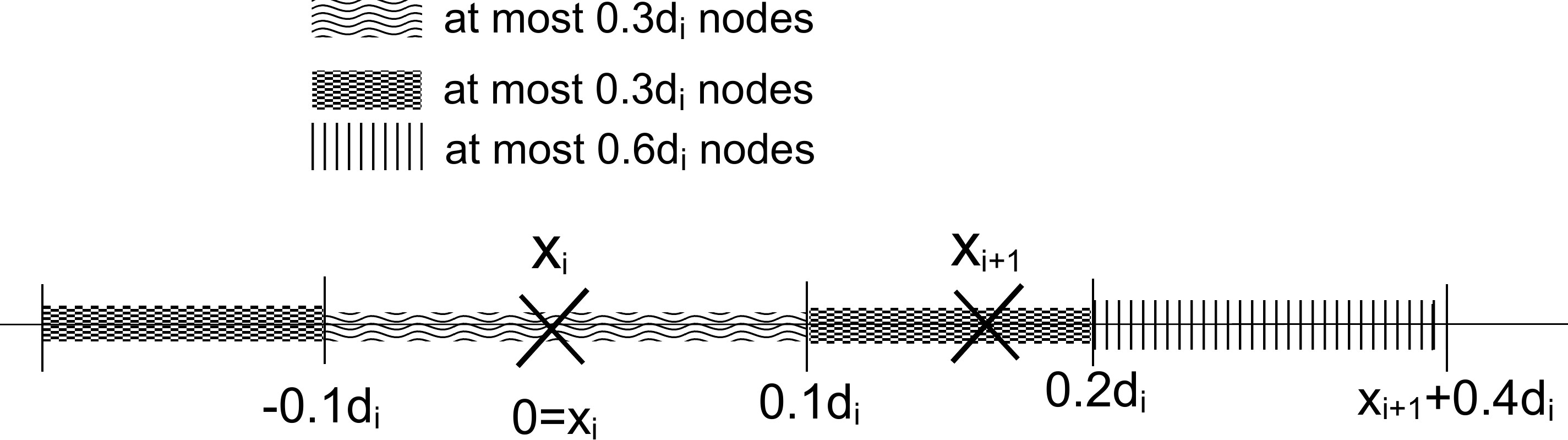}
		\caption{Restrictions on the number of nodes in various intervals when the event $D_i(x_i)$ occurs.}
	\end{center}
\end{figure}
\begin{proof}[Proof of Theorem~\ref{main}]
Set $d_i=10\cdot4^i$ and $\mu(\{d_i\})=\frac{1}{2^i}$ for each $i\in \mathbb{N}$.
Suppose that we condition on a particular vertex $x_i$ of degree $d_i$ belonging to the point process $\mathcal{P}$. (Formally, we consider a Palm version of $\mathcal{P}$, i.e. a version of $\mathcal{P}$ conditioned on $x_i \in \mathcal{P}$, with the rest of the process taken as a stationary background. Note that the Palm version of a homogeneous Poisson process has the same distribution as the unconditioned process itself with an added point; see~\cite[Chapter 11]{Kallenberg}.) 
We condition further on there being at most $0.3 d_i$ points of $\mathcal{P}$ in the interval of length $0.2d_i$ centered at $x_i$, and write $F_i(x_i)$ for the event we are conditioning on.  
By the standard properties of Poisson point processes, conditioning on $F_i(x_i)$ does not affect the probability of any event defined outside the interval. 

Let $A_i(x_i)$ be the event that there is a vertex $x_{i+1} \in \mathcal{P}$ with degree $d_{i+1}$ such that $0.1d_i< \vert x_{i+1} -x_i\vert < 0.2 d_i$. Viewing $\mathcal{P}$ as the union of two thinned Poisson point processes, one of intensity $2^{-(i+1)}$ giving us the vertices of degree $d_{i+1}$ and another of intensity $1-2^{-(i+1)}$ giving us the rest of the vertices,  we see that $\mathbb{P}({\left(A_i(x_i)\right)}^c)=e^{-\frac{0.2d_i}{2^{i+1}}} =e^{-2^i}$. If $A_i(x_i)$ occurs, let $x_{i+1}$ denote the a.s.\ unique vertex of degree $d_{i+1}$ which is nearest to $x_i$ among those degree $d_{i+1}$ vertices lying at distance at least $0.1 d_i$ from $x_i$.

Let $B_i(x_i)$ be the event that there are at most $0.3 d_i$ vertices $x \in \mathcal{P}$ with $0.1 d_i< \vert x-x_{i}\vert<0.2 d_i$. Furthermore, given a point $y$  (not necessarily belonging to our process $\mathcal{P}$) with $0.1d_i< \vert y -x_i\vert < 0.2 d_i$, we let $C_i(x_i, y)$ be the event that there are at most $0.6d_i$ vertices $x$ lying at distance at least $0.2 d_i$ from $x_i$ and at most $0.4 d_i$ from $y$. A quick calculation (using the Chernoff bound) yields that $\mathbb{P}({B_i(x_i)}^c)=e^{-3(\log \frac{3}{2}-\frac{1}{3})4^i+O(i)}$ and $\mathbb{P}({C_i(x_i, y)}^c)\leq e^{-6(\log \frac{3}{2}-\frac{1}{3})4^i+O(i)}$ respectively.

Finally let $D_i(x_i)= A_i(x_i)\cap B_i(x_i) \cap C_i(x_i, x_{i+1})$.  If $D_i(x_i)$ occurs, then $x_i$ and $x_{i+1}$ are strongly connected, since our initial assumption $F_i(x_i)$ together with $B_i(x_i)$ tells us that $\vert B(x_i, \vert x_i -x_{i+1}\vert)\vert \leq 0.6 d_i$, while $F_i(x_i)$ together with $B_i(x_i)\cap C_i(x_i, x_{i+1})$ yield that $\vert B(x_{i+1}, 0.4 d_{i})\vert \leq 1.2 d_i =0.3 d_{i+1}$ (see Figure~1). This last statement is exactly our initial conditioning $F_i(x_i)$ with $i$ replaced by $i+1$; hence $D_i(x_i)\cap F_i(x_i)\subseteq F_{i+1}(x_{i+1}) $. 

By the union bound, we have 
\begin{align*}
\mathbb{P}\big(D_i(x_i)\vert F_i(x_i)\big)&\geq  1-\mathbb{P}\big({\left(A_i(x_i)\right)^c\vert F_i(x_i)}\big) -\mathbb{P}\big({\left(B_i(x_i)\right)^c\vert F_i(x_i)}\big)\\
&\qquad -\sup_{y:\ \vert y -x_i\vert \in (0.1d_i, 0.2d_i)}\mathbb{P}\big({\left(C_i(x_i, y)\right)^c\vert F_i(x_i)}\big)\\
&>1- e^{-2^i}(1+o(1)).
\end{align*}
Selecting $i_0$ sufficiently large and some arbitrary vertex $x_{i_0}$ of degree $d_{i_0}$ as a starting point, we may define events $D_{i_0}(x_{i_0}), D_{i_0+1}(x_{i_0+1}), \ldots$ inductively, each conditional on its predecessors, with
\begin{align*}\mathbb{P}\big(\bigcap_{i \geq i_0} D_i(x_i)\vert F_{i_0}(x_{i_0})\big)&=\prod_{i\geq i_0}\mathbb{P}\big(D_i(x_i)\vert \cap_{j<i} D_j(x_j)\cap F_{i_0}(x_{i_0})\big) \\
&=\prod_{i\geq i_0}\mathbb{P}\big(D_i(x_i)\vert F_i(x_i)\big)
>1 - 2\sum_{i\geq i_0}e^{-2^i}>0. 
\end{align*}
From any vertex $x_{i_0}\in \mathcal{P}$ there is, with strictly positive probability, an infinite path in $G$, $x_{i_0}, x_{i_0+1}, \ldots$ through vertices of increasing degrees $d_{i_0}, d_{i_0+1}, \ldots $ . It follows that $G$ a.s.\ contains a strongly connected infinite component.
\begin{remark}
	The pairs $(x_{i_0},x_{i_0+1}), (x_{i_0+1},x_{i_0+2}), \ldots$ remain strongly connected if we increase the degrees. Thus, if a given degree distribution $\mu$ a.s.\ results in a strongly connected infinite component in $G(\mu)$, then any degree distribution $\mu'$ that stochastically dominates $\mu$ will also a.s.\ yield a strongly connected infinite component in $G_(\mu')$.
	\end{remark} 
Thus, if we set $d_i'=d_i +1$ and $\mu'(d_i')=2^{-i}$, then the associated Poisson graph $G=G(\mu')$ a.s.\ percolates, though all vertices have odd degrees.
\end{proof}
\section{Concluding remarks}
\begin{remark}\label{answerquestion2}
Note that our proof of Theorem~\ref{main} does not use any information about $d_i$ for $i<i_0$. In particular, we could set $\mu(\{1\})=1 - 2^{-i_0+1}$ and $\mu(\{d_i\})=2^{-i}$ for $i \geq i_0$ and still have a distribution for which $G$ percolates a.s.. Choosing $i_0$ sufficiently large, this implies a negative answer to Question~\ref{question2}.
\end{remark}
\begin{remark}\label{dim2}
The existence of degree distributions that a.s.\ result in an infinite component in dimensions $d\geq2$ was established in~\cite[Theorem 1.2 a)]{Deijfen}. Our proof of Theorem~\ref{main} for $G=G_1(\mu)$ easily adapts to higher dimensions $d\geq 2$ (with $d$-dimensional balls and annuli replacing intervals and punctured intervals, and the sequence $(d_i)_{i\in\mathbb{N}}$ being scaled accordingly), giving a different approach to the construction of examples in that setting.
\end{remark}
The distribution $\mu$ we construct in Theorem~\ref{main} has unbounded support, and the expected degree of a vertex in $G(\mu)$ is infinite. We believe however that the answer to Questions~\ref{question2} and~\ref{question1} should still remain negative if $\mu$ is required to have bounded support. 
Indeed we conjecture the following:
\begin{conjecture}\label{bold conjecture}
	For every $\varepsilon>0$, there exists $k=k(\varepsilon)$ such that if $\mu(\{n\in\mathbb{N}: \ n\geq k\})>\varepsilon$, then percolation occurs a.s.\ in $G=G_1(\mu)$.
\end{conjecture}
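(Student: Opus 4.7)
My plan is to extend the strongly-connected-pair framework of Theorem~\ref{main} from a rapidly escalating sequence of degrees to a positive density of ``heavy'' vertices. By the stochastic-domination remark in the proof of Theorem~\ref{main}, it suffices to prove the conjecture for the two-point distribution $\nu$ with $\nu(\{1\})=1-\varepsilon$ and $\nu(\{k\})=\varepsilon$, since any distribution satisfying the hypothesis stochastically dominates $\nu$. Under this reduction the degree-$k$ vertices (call them \emph{heavy}) form a Poisson subprocess of intensity $\varepsilon$, independent of the degree-$1$ subprocess of intensity $1-\varepsilon$. Enumerate the heavy vertices by position as $\ldots,H_{-1},H_0,H_1,\ldots$ and set $\Gamma_i := H_{i+1}-H_i\sim\mathrm{Exp}(\varepsilon)$.

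The first step would be a local estimate. The strong-connection condition for the pair $(H_i,H_{i+1})$ demands that each of the balls $B(H_i,\Gamma_i)$ and $B(H_{i+1},\Gamma_i)$ contains at most $k$ points of $\mathcal{P}$; conditional on $\Gamma_i$, these ball-counts are sums of Poisson-distributed numbers with total mean $O(\Gamma_i)$. A Chernoff bound on these counts combined with the exponential tail of $\Gamma_i$ should yield
\[
\mathbb{P}(H_i \text{ and } H_{i+1} \text{ are strongly connected}) \geq 1-\delta(k),
\]
with $\delta(k)\to 0$ as $k\to\infty$; concretely, $k=C/\varepsilon$ for a sufficiently large absolute constant $C$ ought to suffice. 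Since strongly connected pairs are a.s.\ adjacent in $G$, a doubly infinite chain of such pairs would give an infinite component.

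The main obstacle, and the reason Conjecture~\ref{bold conjecture} remains open, is that in one dimension local high probability does not suffice. An infinite path through consecutive heavy vertices requires that \emph{all} the events $\{H_i,H_{i+1}\text{ strongly connected}\}$ hold simultaneously, and since each fails with probability bounded below uniformly in $i$, the straightforward intersection has probability zero; the summable tail $e^{-2^i}$ that powered the Borel--Cantelli argument for Theorem~\ref{main} is exactly what the bounded-tail regime removes. Moreover, the consecutive-pair events are only finite-range dependent, not independent. To bypass this one would have to exploit the fact that within distance $O(k)$ of every $H_i$ there are $\Theta(\varepsilon k)\gg 1$ further heavy vertices, most of which should be strong-connection partners of $H_i$, so that even when consecutive chains break there is an abundant supply of ``skip'' edges. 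Making this quantitative will, I expect, require either a multi-scale block renormalization in which bad blocks are bridged by long-jump edges (in the spirit of one-dimensional long-range percolation), or a direct coupling of the heavy-vertex induced subgraph of $G$ with a supercritical branching-like process. The intricate dependency structure of the stable multi-matching procedure, layered on top of the one-dimensional topology, is precisely what makes the conjecture delicate.
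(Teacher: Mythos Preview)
The statement you are treating is labelled \emph{Conjecture} in the paper and is left open there; the paper gives no proof, only a brief motivational discussion. Your write-up is therefore not comparable to any proof in the paper, and you yourself correctly acknowledge this, describing your text as an outline of an approach together with the obstruction that keeps the problem open.

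Your preliminary observations are sound. The stochastic-domination reduction to the two-point law $\nu(\{1\})=1-\varepsilon$, $\nu(\{k\})=\varepsilon$ is valid: any $\mu$ with $\mu(\{n\ge k\})>\varepsilon$ does stochastically dominate $\nu$ (since $\mu(\{n\ge t\})\ge\mu(\{n\ge k\})>\varepsilon=\nu(\{n\ge t\})$ for $2\le t\le k$, and the endpoints are trivial), and the paper's remark on strong connection under stochastic domination then applies. Your diagnosis of the obstacle is also accurate: the Borel--Cantelli engine of Theorem~\ref{main} runs on a summable failure probability $e^{-2^i}$, which evaporates in the bounded-degree regime where each consecutive-heavy-pair event fails with a fixed positive probability, so the naive intersection has probability zero. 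The suggested remedies (skip edges, block renormalization, long-range comparison) are plausible directions but, as you say, none is carried out here; in particular the dependence structure of the stable multi-matching scheme is precisely what makes any such comparison delicate. In short: no proof is offered, and none exists in the paper either; your discussion is an honest and essentially correct account of why.
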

One might expect that there is a \emph{critical value} $d_{\star}$ of the expected degree for percolation.
We believe however that no such critical value exists: 
\begin{conjecture}	
	There is no critical value $d_{\star}$ such that if $\mathbb{E}(D)<d_{\star}$ then a.s.\ percolation does not occur, while if $\mathbb{E}(D)>d_{\star}$, then a.s.\ percolation occurs in the stable multi-matching scheme.
\end{conjecture}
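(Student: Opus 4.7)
The plan is to show that for every candidate $d_\star > 0$, at least one of the two implications defining \emph{critical} fails. Since Theorem~\ref{main} and Remark~\ref{answerquestion2} already supply percolating distributions of infinite expected degree, it suffices to exhibit, for each $d_\star$, a \emph{non}-percolating distribution with mean strictly larger than $d_\star$.

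For $d_\star < 2$ this is cheap. If $d_\star < 1$, the distribution $\delta_1$ has $\mathbb{E}(D) = 1 > d_\star$ and trivially does not percolate. If $1 \leq d_\star < 2$, take the two-atom family $\mu_p = (1-p)\delta_1 + p\delta_2$, which has mean $1 + p$ ranging over $(1,2)$; by Theorem~\ref{mainDeijfen}(ii) none of these percolate, and choosing $p$ with $1 + p > d_\star$ covers the entire range.

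The substantive regime is $d_\star \geq 2$, where one needs non-percolating distributions of arbitrarily large finite mean. My proposed approach is to establish that for every sufficiently large odd integer $n$, the atomic distribution $\mu = \delta_n$ fails to percolate; this is consistent with the simulations reported in~\cite{Deijfen2}, and then choosing $n > d_\star$ handles $d_\star$. A more flexible family to analyse is $(1-q)\delta_1 + q\delta_n$: one can tune $q$ and $n$ to achieve any prescribed mean, and the abundant degree-$1$ leaves absorb stubs of the high-degree vertices, plausibly reinforcing whatever parity obstruction is present in $\delta_n$ by starving the induced connectivity among the degree-$n$ vertices.

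The main obstacle is making this non-percolation step rigorous. Ruling out an infinite cluster on $\mathbb{R}$ is structurally harder than producing one, partly because the stable multi-matching carries long-range spatial dependencies: whether two points are joined by an edge can depend on the entire configuration between them. A natural route is a multi-scale renormalization on $\mathbb{R}$, decomposing the line into blocks at a scale comparable to the typical neighbourhood of a high-degree vertex, quantifying a local parity deficit that with high probability prevents long matched chains from bridging adjacent blocks, and closing the induction with a Peierls-type contour bound or Borel--Cantelli estimate. I expect this to be where the genuine difficulty sits: the simplest instance of the approach is the open Question~\ref{question1} for $\mu = \delta_3$, so any method strong enough to prove the conjecture at the boundary $d_\star = 2$ should, in particular, resolve whether $G(\delta_3)$ percolates.
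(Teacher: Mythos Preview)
This statement is a \emph{conjecture}; the paper does not prove it and offers only a paragraph of heuristic motivation. So there is no proof to match, only a plausibility argument to compare against --- and yours is not the one the authors had in mind.

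Your plan attacks the implication ``$\mathbb{E}(D)>d_\star\Rightarrow$ percolation'' by exhibiting non-percolating distributions of arbitrarily large mean. The range $d_\star<2$ is handled rigorously via Theorem~\ref{mainDeijfen}(ii), but for $d_\star\geq 2$ you reduce to showing that $G(\delta_n)$ (or $G((1-q)\delta_1+q\delta_n)$) fails to percolate for large odd $n$. As you yourself note, already $n=3$ is the open case behind Question~\ref{question1}; so the substantive part of your proposal replaces one open problem by another of at least comparable difficulty, and nothing suggests large $n$ is any easier than $n=3$. That is a genuine gap, and you have located it correctly.

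The paper's heuristic is structurally different and worth knowing, because it avoids proving non-percolation from scratch. It assumes a critical $d_\star$ exists and plays the two defining implications against each other: take $\mu$ supported on $\{1,\delta\}$ with $\delta\gg d_\star$ but $\mathbb{E}(D)<d_\star$, so by one hypothesis $G(\mu)$ does not percolate. After roughly $\delta/2$ matching rounds most degree-$1$ vertices have been paired off, and the residual process on the degree-$\delta$ vertices is argued to resemble a rescaled stable multi-matching with expected degree exceeding $\delta/4\gg d_\star$; by the other hypothesis this percolates, forcing $G(\mu)$ to percolate after all --- a contradiction. Here non-percolation is obtained for free from the assumption, and what would need to be made rigorous is a coupling/comparison statement for the residual process, a different (and arguably more tractable) obstacle than the outright non-percolation result your route requires.
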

Let us give some motivation for this conjecture. By~\cite[Theorem 1.2 b)]{Deijfen}, for any $\mu$ with support on $\{1,2\}$ and $\mu(\{1\})>0$, $G_1(\mu)$ a.s.\ does not percolate. So any putative critical value must satisfy $d_{\star}\geq 2$. Now, pick $\varepsilon>0$ and choose $\delta \gg d_{\star}$. Let $\mu$ be a degree distribution with support on $\{1, \delta\}$, such that the expected degree satisfies $\mathbb{E} (D)<d_{\star}-\varepsilon$. By definition of $d_{\star}$ this would imply $G(\mu)$ a.s.\ does not percolate. Assign degrees independently at random to the vertices of $G(\mu)$. Perform the first $\delta/2$ stages of the stable multi-matching process. By then most degree $1$ vertices have been matched (and in fact matched to other degree $1$ vertices). Now force the remaining degree $1$ vertices to match to their future partners. Consider the vertices that had originally been assigned $\delta$ edge stubs. A number of these edge stubs will have been used up by the process so far, and the number of edge stubs left at each vertex is not independent; nevertheless we expect most degree $\delta$ vertices will have at least $\delta/4$ edge stubs left, and that the number of stubs left will be almost independently distributed. Thus, we believe the stable multi-matching scheme on the remaining edge stubs of the degree $\delta$ vertices will contain as a subgraph the edges of a stable multi-matching scheme on a thinned Poisson point process on $\mathbb{R}$ corresponding to the degree $\delta$ vertices, and with degrees given by some random variable $D'$ with $\mathbb{E}(D')>\delta/4\gg d_{\star}$. Since rescaling does not affect the stable multi-matching process, this would imply $G(\mu)$ a.s.\ percolates (by definition of $d_{\star}$), a contradiction.
\subsection*{Acknowledgements} 
We would like to thank Svante Janson for valuable comments.
 
\end{document}